\newdimen\plusheight
\def\+{\;\lower\plusheight\hbox{$+$}\;}
\newdimen\minusheight
\def\-{\;\lower\minusheight\hbox{$-$}\;}
\newdimen\cdotsheight
\def\cds{\lower\cdotsheight\hbox{$\cdots$}}
\renewcommand{\(}{\left\(}
\renewcommand{\)}{\right\)}
\renewcommand{\[}{\left[}
\numberwithin{equation}{section}
 \theoremstyle{plain}
\newtheorem{theorem}{Theorem}[section]
\newtheorem{lemma}[theorem]{Lemma}
\newtheorem{corollary}[theorem]{Corollary}
\begin{document}
\allowdisplaybreaks
\title[New Congruences Modulo 2, 4, and 8 for the Number of Tagged Parts Over the Partitions with Designated Summands] {New Congruences Modulo 2, 4, and 8  for the Number of Tagged Parts Over the Partitions with Designated Summands}

\author{Nayandeep Deka Baruah}
\address{Department of Mathematical Sciences, Tezpur University, Napaam-784028, Sonitpur, Assam, INDIA}
\email{nayan@tezu.ernet.in}

\author{Mandeep Kaur}
\address{Department of Mathematical Sciences, Tezpur University, Napaam-784028, Sonitpur, Assam, INDIA}
\email{mandeep@tezu.ernet.in}


\vspace*{0.5in}
\begin{center}
{\bf New Congruences Modulo 2, 4, and 8 for the Number of Tagged Parts Over the Partitions with Designated Summands}\\[5mm]
{\footnotesize  NAYANDEEP DEKA BARUAH and MANDEEP KAUR
}\\[3mm]
\end{center}

\vskip 5mm \noindent{\footnotesize{\bf Abstract.}
Recently, Lin introduced two new partition functions $\textup{PD}_\textup{t}(n)$ and $\textup{PDO}_\textup{t}(n)$, which count the total number of tagged parts over all partitions of $n$ with designated summands and the total number of tagged parts over all partitions of $n$ with designated summands in which all parts are odd. Lin also proved some congruences modulo 3 and 9 for $\textup{PD}_\textup{t}(n)$ and $\textup{PDO}_\textup{t}(n)$, and conjectured some congruences modulo 8. In this paper, we prove the congruences modulo 8 conjectured by Lin and also find many new congruences and infinite families of congruences modulo some small powers of 2.}
\vskip 3mm
\noindent{\footnotesize Key Words: Partitions with designated summands; Tagged Part; Dissection formula; Congruence.}

\vskip 3mm

\noindent {\footnotesize 2010 Mathematical Reviews Classification
Numbers: Primary 11P83; Secondary 05A17}.

\section{\textbf{Introduction}}\label{secone}
In \cite{lovejoy}, Andrews, Lewis and Lovejoy introduced and studied a new class of partitions, partitions with designated summands. Partitions with designated summands are constructed by taking ordinary partitions and tagging exactly
one of each part size. For example, there are 10 partitions of $4$
with designated summands, namely,
$$4^\prime, \quad 3^\prime+1^\prime,  \quad 2^\prime+2, \quad 2+2^\prime, \quad 2^\prime+1^\prime+1, \quad
2^\prime+1+1^\prime,$$
 $$\quad  1^\prime+1+1+1, \quad 1+1^\prime+1+1, \quad 1+1+1^\prime+1, \quad 1+1+1+1^\prime.$$
The total number of partitions of $n$ with designated summands is denoted by $\textup{PD}(n)$. Hence, $\textup{PD}(4)=10.$ Andrews, Lewis and Lovejoy \cite{lovejoy} also studied $\textup{PDO}(n)$, the number of partitions of $n$ with designated summands in which all parts are odd. From the above example, $\textup{PDO}(4)=5$. Further studies on $\textup{PD}(n)$ and $\textup{PDO}(n)$ were carried out by Chen, Ji, Jin,  and Shen \cite{chen-ji}, Baruah and Ojah \cite{ndb-ojah-integers}, and Xia \cite{xia-jnt}.

Recently, Lin \cite{lin} introduced two new partition functions $\textup{PD}_\textup{t}(n)$ and $\textup{PDO}_\textup{t}(n)$, which count the total number of tagged parts over all partitions of $n$ with designated summands and the total number of tagged parts over all partitions of $n$ with designated summands in which all parts are odd, respectively. From the partitions of $4$ with designated summands given above, we note that  $\textup{PD}_\textup{t}(4)=13$ and $\textup{PDO}_\textup{t}(4)=6$.
Lin \cite{lin}  proved that the generating functions of $\textup{PD}_\textup{t}(n)$ and $\textup{PDO}_\textup{t}(n)$ are
\begin{align}\label{pdt-gen}
\sum_{n=0}^{\infty}\textup{PD}_\textup{t}(n)q^n&=\dfrac{1}{2}\left(\dfrac{f_3^5}{f_1^3f_6^2}-\dfrac{f_6}{f_1f_2f_3}\right)\\\intertext{and}
\label{pdot-gen}\sum_{n=0}^{\infty}\textup{PDO}_\textup{t}(n)q^n&=\dfrac{qf_2f_3^2f_{12}^2}{f_1^2f_6},
\end{align}
where as usual, for any complex number $a$ and $|q|<1$,
\begin{align*}
(a;q)_\infty&:=\prod_{n=1}^\infty(1-aq^{n-1})
\end{align*}
and for any positive integer $k$, $f_k:=(q^k;q^k)_\infty$.

Lin \cite{lin} also derived several congruences modulo small powers of 3 for $\textup{PD}_\textup{t}(n)$ and $\textup{PDO}_\textup{t}(n)$. For example, for any nonnegative integers $n$ and $k$,
\begin{align}
\textup{PD}_\textup{t}(3n)&\equiv 0 ~(\textup{mod}~3),\notag\\
\textup{PD}_\textup{t}(3n+2)&\equiv 0 ~(\textup{mod}~3),\notag\\
\label{pdt-3-lin36-21}\textup{PD}_\textup{t}(36n+21)&\equiv 0 ~(\textup{mod}~9),\\
\label{pdt-3-lin36-33}\textup{PD}_\textup{t}(36n+33)&\equiv 0 ~(\textup{mod}~9),\\
\textup{PD}_\textup{t}(48n+20)&\equiv 0 ~(\textup{mod}~9),\notag\\
\textup{PD}_\textup{t}(48n+36)&\equiv 0 ~(\textup{mod}~9),\notag\\
\textup{PD}_\textup{t}(72n+42)&\equiv 0 ~(\textup{mod}~9),\notag\\
\textup{PD}_\textup{t}(72n+66)&\equiv 0 ~(\textup{mod}~9),\notag\\
\textup{PDO}_\textup{t}(8n)&\equiv 0 ~(\textup{mod}~9),\notag\\
\textup{PDO}_\textup{t}(24n)&\equiv 0 ~(\textup{mod}~27),\notag\\
\textup{PDO}_\textup{t}(36n)&\equiv 0 ~(\textup{mod}~27),\notag\\
\textup{PDO}_\textup{t}(36n+24)&\equiv 0 ~(\textup{mod}~27),\notag\\
\textup{PDO}_\textup{t}(8\cdot5^{2k+1}(30n+6a+5))&\equiv 0 ~(\textup{mod}~27),\notag
\end{align}
where $a=1,2,3,4$.

Very recently, Adansie, Chern and Xia \cite{adansie-etal} found the following two infinite families of congruences modulo 9.

For any nonnegative integers $n$ and $k$,
\begin{align*}
\textup{PD}_\textup{t}(3^{2k+1}(9n+2))&\equiv 0 ~(\textup{mod}~9)\\\intertext{and}
\textup{PD}_\textup{t}((3^{2k+1}(9n+7))&\equiv 0 ~(\textup{mod}~9).
\end{align*}

By analyzing a large number of values of $\textup{PD}_\textup{t}(n)$ and $\textup{PDO}_\textup{t}(n)$ via MAPLE, Lin \cite{lin} further speculated the existence of congruences modulo small powers of 2. For example,  he conjectured that, for any nonnegative integer $n$,
\begin{align}
\label{pdt-lin1}\textup{PD}_\textup{t}(48n+28)&\equiv 0 ~(\textup{mod}~8),\\
\label{pdt-lin2}\textup{PD}_\textup{t}(48n+46)&\equiv 0 ~(\textup{mod}~8),\\
\label{pdot-lin1}\textup{PDO}_\textup{t}(8n+6)&\equiv0 ~(\textup{mod}~8),\intertext{and}
\label{pdot-lin2}\textup{PDO}_\textup{t}(8n+7)&\equiv0 ~(\textup{mod}~8).
\end{align}
In this paper, we prove the above congruences and also find many new congruences and infinite families of congruences modulo 2 and 4.

 The following theorem states the exact generating functions of $\textup{PDO}_\textup{t}(8n+6)$ and $\textup{PDO}_\textup{t}(8n+7)$ that immediately implies the congruences \eqref{pdot-lin1} and \eqref{pdot-lin2}.

\begin{theorem}\label{thm-lin-pdot}
For any nonnegative integer $n$, we have
\begin{align}
\label{thm-pdot-lin1}\sum_{n=0}^{\infty}\textup{PDO}_\textup{t}(8n+6)q^n&=8\left(2\dfrac{f_2^{16}f_6^{10}}{f_1^{17}f_3^3f_{12}^4}-q
\dfrac{f_2^{28}f_3f_{12}^{4}}{f_1^{21}f_6^2f_{4}^8}-16q^2\dfrac{f_2^{4}f_3f_4^{8}f_{12}^4}{f_1^{13}f_6^2}\right)\\\intertext{and}
\label{thm-pdot-lin2}\sum_{n=0}^{\infty}\textup{PDO}_\textup{t}(8n+7)q^n&=8\left(\dfrac{f_2^{14}f_3f_6^{4}f_8^{2}}{f_1^{14}f_4^3f_{12}^2}+2
\dfrac{f_2^{9}f_3^2f_{4}^5f_6}{f_1^{13}f_8^2}+4q\dfrac{f_2^{8}f_3^3f_4f_8^2f_{12}^2}{f_1^{12}f_6^2}\right).\end{align}
\end{theorem}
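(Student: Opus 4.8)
The plan is to extract the arithmetic progressions $8n+6$ and $8n+7$ from the generating function \eqref{pdot-gen} by a sequence of $2$-dissections. Starting from
\[
\sum_{n\ge0}\textup{PDO}_\textup{t}(n)q^n=\frac{qf_2f_3^2f_{12}^2}{f_1^2f_6},
\]
I would first rewrite the $q$-dependence so that the powers of $2$ hidden in the eta-quotient become visible. The standard tool is the $2$-dissection of $1/f_1^2$ (equivalently of $\varphi(-q)^{-1}$ or of $\psi(q)$), together with the $2$-dissections of $f_3^2$, $f_3$, $f_{12}^2$, and $1/f_6$; each of these is a well-known identity expressible through the functions $f_k$. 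Substituting these and collecting the terms according to the residue of the exponent modulo $2$ isolates $\sum \textup{PDO}_\textup{t}(2n+r_1)q^n$ for the appropriate $r_1$. Iterating the $2$-dissection two more times (passing from modulus $2$ to $4$ to $8$), and at each stage discarding the progressions we do not want, I arrive at generating functions for $\textup{PDO}_\textup{t}(8n+6)$ and $\textup{PDO}_\textup{t}(8n+7)$. The claimed right-hand sides of \eqref{thm-pdot-lin1} and \eqref{thm-pdot-lin2} are exactly what this process should produce after simplification.

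The key steps, in order: (1) record the needed $2$-dissection lemmas for $f_1^{-2}$, $f_3$, $f_3^2$, $1/f_6$, $f_{12}^2$ in terms of products $f_k$ (these are either quoted from the literature or derived from Ramanujan's theta-function identities, and I expect the paper to have stated them as lemmas before this theorem); (2) perform the first dissection of $qf_2f_3^2f_{12}^2/(f_1^2f_6)$ and pick out the odd-exponent part, giving $\sum\textup{PDO}_\textup{t}(2n+1)q^n$ as an eta-quotient plus, possibly, a sum of two eta-quotients; (3) perform the second dissection to reach modulus $4$ and select the progression feeding into both $6$ and $7$ mod $8$, i.e. $4n+3$; (4) perform the third dissection to split $4n+3$ into $8n+3$ and $8n+7$ — wait, more carefully, to split into $8n+6+?$; concretely one tracks which sub-progressions of $\{0,\dots,7\}$ survive and lands on $8n+6$ and $8n+7$; (5) simplify each surviving eta-quotient to match the stated form, and finally (6) observe that every term on the right of \eqref{thm-pdot-lin1} and \eqref{thm-pdot-lin2} carries the explicit factor $8$, which is precisely the content of Lin's conjectured congruences \eqref{pdot-lin1} and \eqref{pdot-lin2}.

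The main obstacle is the bookkeeping in steps (2)–(4): after substituting several $2$-dissections into a single eta-quotient one obtains a sum of many monomials in the $f_k$, and separating them by residue class while keeping the expressions in closed form requires repeated use of relations among $f_k$'s of different moduli (for instance $f_2^2=f_1f_4\cdot(\text{theta quotient})$-type identities, and relations expressing $f_4,f_8,f_{12}$ that appear after dissection back in terms of the "base" moduli). Controlling the proliferation of terms, and in particular verifying that the cross-terms collapse so that the final answer has only three summands with the predicted exponents, is where the real work lies; the factor $8$ emerges automatically once the dissections are carried out correctly, since it is forced by the $q$ in the numerator combined with the $2$-adic valuations introduced by the dissection identities. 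A secondary subtlety is choosing, at each stage, the dissection identity in the form that minimizes the number of auxiliary moduli introduced, so that the three iterations do not generate functions of modulus larger than $12$; this is a matter of selecting the right variant of each lemma rather than a genuine difficulty. Once the eta-quotient for each of $8n+6$ and $8n+7$ is in hand, reading off \eqref{pdot-lin1} and \eqref{pdot-lin2} is immediate.
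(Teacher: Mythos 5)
Your plan is essentially the paper's proof: three successive 2-dissections of \eqref{pdot-gen}, extracting $2n$, then $4n+2$, then $8n+6$ (and $2n+1$, $4n+3$, $8n+7$ on the odd side), using the dissection lemmas collected in Section \ref{sec2}. The one refinement the paper makes, which you anticipate in your closing remark, is to dissect the \emph{ratios} $f_3^2/f_1^2$, $f_3/f_1^3$, $1/f_1^2$, $1/f_1^4$ directly rather than dissecting $f_3^2$, $f_3$, $f_{12}^2$, $1/f_6$ separately (the factors $f_2$, $f_6$, $f_{12}$ are already series in $q^2$ and need no dissection), which keeps each stage to a handful of terms; note also that $8n+6$ descends from the progression $4n+2$, not from $4n+3$ as you first wrote before self-correcting.
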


In the following theorem and corollary, we present our new congruences and infinite families of congruences modulo 2 and 4 for $\textup{PD}_\textup{t}(n)$.

\begin{theorem}\label{thm-mod2}
For any nonnegative integers $k$, $\ell$  and $n$, we have
\begin{align}
\label{24-12}\textup{PD}_\textup{t}(24n+12)&\equiv 0~(\textup{mod}~2),\\
\label{24-21}\textup{PD}_\textup{t}(24n+21)&\equiv 0~(\textup{mod}~2),\\
\label{48n30}\textup{PD}_\textup{t}(48n+30)&\equiv 0~(\textup{mod}~2),\\
\label{eq133}\textup{PD}_\textup{t}(144n+102)&\equiv 0~(\textup{mod}~2),\\
\label{216-153}\textup{PD}_\textup{t}(216n+153)&\equiv 0~(\textup{mod}~2),\\
\label{36n21}\textup{PD}_\textup{t}(36n+21)&\equiv 0~(\textup{mod}~4),\\
\label{36n33}\textup{PD}_\textup{t}(36n+33)&\equiv 0~(\textup{mod}~4),\\
\label{4k.12n}\textup{PD}_\textup{t}(2^{2k}\cdot12n)&\equiv \textup{PD}_\textup{t}(12n) ~(\textup{mod}~4),\\
\label{24-l-12}\textup{PD}_\textup{t}(3^\ell\cdot2^{2k}(24n+12))&\equiv \textup{PD}_\textup{t}(24n+12) ~(\textup{mod}~4),\\
\label{96-60}\textup{PD}_\textup{t}(96n+60)&\equiv 0~(\textup{mod}~4),\\
\label{96-84}\textup{PD}_\textup{t}(96n+84)&\equiv 0~(\textup{mod}~4),\\
\label{144-84}\textup{PD}_\textup{t}(144n+84)&\equiv 0~(\textup{mod}~4),\\
\label{144-120}\textup{PD}_\textup{t}(144n+120)&\equiv 0~(\textup{mod}~4),\\
\label{144-132}\textup{PD}_\textup{t}(144n+132)&\equiv 0~(\textup{mod}~4),\\
\label{288-204}\textup{PD}_\textup{t}(3^k(288n+204))&\equiv \textup{PD}_\textup{t}(288n+204) \equiv 0~(\textup{mod}~4),\\
\label{864-792}\textup{PD}_\textup{t}(864n+792)&\equiv 0~(\textup{mod}~4),\\
\label{1728-1224}\textup{PD}_\textup{t}(1728n+1224)&\equiv 0~(\textup{mod}~4),\\
\label{2592-1080}\textup{PD}_\textup{t}(2592n+1080)&\equiv 0~(\textup{mod}~4),\\
\label{36-30}\textup{PD}_\textup{t}(36n+30)&\equiv 0~(\textup{mod}~4),\\
\label{108-90}\textup{PD}_\textup{t}(108n+90)&\equiv 0~(\textup{mod}~4),\\
\label{12-6}\textup{PD}_\textup{t}(3^{2k}(12n+6))&\equiv \textup{PD}_\textup{t}(12n+6)~(\textup{mod}~4).
\end{align}
\end{theorem}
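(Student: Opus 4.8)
The plan is to base everything on \eqref{pdt-gen}. Write $G(q):=\sum_{n\ge0}\textup{PD}_\textup{t}(n)q^n$, so that $2G(q)=\dfrac{f_3^5}{f_1^3f_6^2}-\dfrac{f_6}{f_1f_2f_3}$, and study $2G(q)$ modulo $8$, which delivers $\textup{PD}_\textup{t}(n)$ modulo $4$ (and in particular modulo $2$). The first task is a normalizing reduction. Using the elementary congruences $f_k^2\equiv f_{2k}\pmod 2$, $f_k^4\equiv f_{2k}^2\pmod 4$, $f_k^8\equiv f_{2k}^4\pmod 8$ (for $k=1,3$), and clearing negative powers of $f_1,f_3$ by multiplying numerator and denominator by suitable powers of them, I would bring the right-hand side to the shape
\[
2G(q)\equiv A(q)+2B(q)\pmod 8,
\]
with $A,B$ explicit eta-quotients in $f_1,f_2,f_3,f_6$ of small total degree (equivalently, small combinations of $\varphi(-q)=f_1^2/f_2$ and $\psi(q)=f_2^2/f_1$ at arguments $q$ and $q^3$) and $A$ having only even coefficients. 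Halving then produces a closed form for $G(q)\pmod 4$, and this single identity is what all the listed congruences are harvested from.

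The second ingredient is the standard stock of $2$- and $3$-dissection formulas for the eta-quotients that arise (for $1/f_1^3$, $1/(f_1f_2)$, $1/(f_1f_3)$, $f_3/f_1$, $\psi(q)$, $\varphi(-q)$, and products such as $\psi(q)\psi(q^3)$), which I would record as preliminary lemmas. Each congruence in the theorem that is not one of the self-similar families is then extracted from the normal form by a bounded sequence of moves: (i) substitute a $2$- or a $3$-dissection to split off the residue class of interest — the factorizations $24=2^3\cdot3$, $36=2^2\cdot3^2$, $48=2^4\cdot3$, $108=2^2\cdot3^3$, $144=2^4\cdot3^2$, $216=2^3\cdot3^3$, $288=2^5\cdot3^2$, $864=2^5\cdot3^3$, $1728=2^6\cdot3^3$, $2592=2^5\cdot3^4$ dictating how many $2$-dissections to interleave with how many $3$-dissections; (ii) re-apply $f_k^{2^j}\equiv f_{2k}^{2^{j-1}}$ to keep the intermediate quotients manageable; (iii) iterate. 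The vanishing then falls out because, after the dissection, the subseries carrying the target progression is visibly divisible by $2$ (respectively $4$): it either acquires an explicit such factor once the $2$-adic substitutions are made, or the dissection leaves an odd (resp.\ even) function whose even-index (resp.\ odd-index) part must vanish. Several of the deeper mod-$4$ congruences (those with moduli $96,144,864,1728,2592$) come from one further round of this applied to the mod-$4$ generating function of a coarser progression already in hand.

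The self-similar families \eqref{4k.12n}, \eqref{24-l-12}, \eqref{288-204}, \eqref{12-6} I would prove by induction on $k$ (and on $\ell$). After isolating $\sum_n\textup{PD}_\textup{t}(12n)q^n$, $\sum_n\textup{PD}_\textup{t}(24n+12)q^n$, $\sum_n\textup{PD}_\textup{t}(288n+204)q^n$ and $\sum_n\textup{PD}_\textup{t}(12n+6)q^n$ as explicit eta-quotients modulo $4$, I would establish for each a functional equation of the form ``(relevant section of the mod-$4$ generating function) $\equiv$ (the mod-$4$ generating function itself)'': a $4$-section identity for \eqref{4k.12n}, a $9$-section identity for \eqref{12-6}, and both a $4$- and a $3$-section identity for \eqref{24-l-12}; for \eqref{288-204} the $3$-section identity is combined with the $k=0$ vanishing $\textup{PD}_\textup{t}(288n+204)\equiv0\pmod4$, proved as one of the plain congruences above. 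Iterating these functional equations yields the families.

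The main obstacle is bookkeeping rather than a single hard idea: among the available dissection identities one must select exactly the combination that forces the ``wrong'' pieces to be divisible by the needed power of $2$ at every stage, and then verify the many resulting eta-quotient identities, one or two of which require an auxiliary $3$-dissection of a theta-function product that is not itself routine. For the infinite families there is the added subtlety that the intermediate eta-quotient one lands on must be genuinely self-reproducing under the next dissection, so the normal form chosen at each step is constrained; pinning down a normal form that survives iteration is the delicate point.
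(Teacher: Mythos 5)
Your overall architecture is the right one and in its broad strokes coincides with the paper's: reduce the generating function \eqref{pdt-gen} modulo $8$ using $f_k^{2^j}\equiv f_{2k}^{2^{j-1}}$, interleave $2$- and $3$-dissections according to the factorization of each modulus, and dispose of the four infinite families by self-reproducing section identities. Indeed your treatment of \eqref{4k.12n} (a $4$-section), \eqref{12-6} (a $9$-section), \eqref{24-l-12} (a $3$-section combined with \eqref{4k.12n}) and \eqref{288-204} (a $3$-section plus the $k=0$ vanishing) is exactly what the paper does.

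The gap is in your first step. Every progression in the theorem lies in $3\mathbb{Z}$, so a $3$-dissection must be performed before anything else, and the $3$-dissections of $1/f_1^3$ and $1/(f_1f_2)$ that this forces are not eta-quotient identities: they introduce the cubic theta function $a(q)$ (Lemma \ref{lemma6}). Consequently the normal form you posit --- $2G(q)\equiv A(q)+2B(q)\pmod 8$ with $A,B$ eta-quotients in $f_1,\dots,f_6$ from which everything is ``harvested'' --- is not reachable by the reductions you list. The device the paper relies on, and which is absent from your plan, is the elimination of $a$ at the mod-$8$ level by writing $a^2(q)=\bigl(2(a(q)+a(q^2))-(a(q)+2a(q^2))\bigr)^2$ and $a(q^2)=(a(q)+2a(q^2))-(a(q)+a(q^2))$ and then invoking \eqref{a-2q2} and \eqref{a-q2}, supplemented by $a(q)\equiv1\pmod 2$, $a^2(q)\equiv1\pmod 4$ and the $2$-dissection \eqref{aq-q4}; only after this does one land on eta-quotients to which the $2$-dissection lemmas apply. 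Your alternative of clearing denominators first, which turns $2G(q)$ into $\frac{(f_1f_3)^5}{f_2^4f_6^2}-\frac{(f_1f_3)^7}{f_2^5f_6^3}$ modulo $8$, merely trades this for $3$-dissections of high powers of $f_1f_3$ --- precisely the ``non-routine auxiliary dissection of a theta-function product'' that you defer. So the single step you leave unspecified is the step on which the whole computation turns, and you should either supply the $a(q)$ identities above or an equivalent mechanism before the harvesting can begin.
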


\begin{corollary}\label{cor-mod4-1}
For any positive integers $k$, $\ell$ and any nonnegative integer $n$, we have
\begin{align*}
\textup{PD}_\textup{t}(3^\ell\cdot2^{2k}(8n+5))&\equiv 0~(\textup{mod}~4),\\
\textup{PD}_\textup{t}(3^\ell\cdot2^{2k}(8n+7))&\equiv 0~(\textup{mod}~4),\\
\textup{PD}_\textup{t}(3^\ell\cdot2^{2k}(12n+7))&\equiv 0~(\textup{mod}~4),\\
\textup{PD}_\textup{t}(3^\ell\cdot2^{2k}(12n+11))&\equiv 0~(\textup{mod}~4),\\
\textup{PD}_\textup{t}(3\cdot2^{2k+1}(6n+5))&\equiv 0~(\textup{mod}~4),\\
\textup{PD}_\textup{t}(3^{\ell+1}\cdot2^{2k}(24n+17))&\equiv 0~(\textup{mod}~4),\\
\textup{PD}_\textup{t}(3^2\cdot2^{2k+1}(12n+11))&\equiv 0~(\textup{mod}~4),\\
\textup{PD}_\textup{t}(3^2\cdot2^{2k+1}(24n+17))&\equiv 0~(\textup{mod}~4),\\
\textup{PD}_\textup{t}(3^3\cdot2^{2k+1}(12n+5))&\equiv 0~(\textup{mod}~4),\\
\textup{PD}_\textup{t}(2\cdot3^{k}(6n+5))&\equiv 0~(\textup{mod}~4).
\end{align*}
\end{corollary}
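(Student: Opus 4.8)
The plan is to obtain every line of Corollary~\ref{cor-mod4-1} purely arithmetically from the congruences and self-similar recurrences already collected in Theorem~\ref{thm-mod2}, without any further $q$-series work. Four facts do all the work: \eqref{4k.12n}, which strips a factor $2^{2k}$ off any argument that is a multiple of $12$; \eqref{24-l-12}, which strips a factor $3^{\ell}2^{2k}$ off any argument in the class $24m+12$; \eqref{288-204}, which strips a factor $3^{k}$ off the argument $288n+204$; and \eqref{12-6}, which strips an \emph{even} power of $3$ off any argument in the class $12m+6$. For each line of the corollary I would write the argument as $2^{a}3^{b}r$, peel off the prime powers with these four recurrences until landing on one of the explicit base congruences \eqref{96-60}--\eqref{2592-1080}, \eqref{36-30}, \eqref{108-90}, and conclude.

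For the first four lines, with $k,\ell\ge 1$, I would factor $3^{\ell}2^{2k}(8n+5)=3^{\ell-1}2^{2k-2}(96n+60)$, $3^{\ell}2^{2k}(8n+7)=3^{\ell-1}2^{2k-2}(96n+84)$, $3^{\ell}2^{2k}(12n+7)=3^{\ell-1}2^{2k-2}(144n+84)$ and $3^{\ell}2^{2k}(12n+11)=3^{\ell-1}2^{2k-2}(144n+132)$. Each of $96n+60=24(4n+2)+12$, $96n+84=24(4n+3)+12$, $144n+84=24(6n+3)+12$, $144n+132=24(6n+5)+12$ lies in the class $24m+12$, so \eqref{24-l-12} collapses the factor $3^{\ell-1}2^{2k-2}$ and then \eqref{96-60}, \eqref{96-84}, \eqref{144-84}, \eqref{144-132} respectively give the result. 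For the line $\textup{PD}_\textup{t}(3^{\ell+1}2^{2k}(24n+17))$ I would write $3^{\ell+1}2^{2k}(24n+17)=4^{k-1}\cdot 3^{\ell}(288n+204)$; since $3^{\ell}(288n+204)=12\cdot 3^{\ell}(24n+17)$ is a multiple of $12$, \eqref{4k.12n} removes the $4^{k-1}$ and \eqref{288-204} then removes the $3^{\ell}$, leaving $\textup{PD}_\textup{t}(288n+204)\equiv 0\pmod 4$.

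The four lines carrying a factor $2^{2k+1}$ are handled by peeling off a single power of $2$ so the argument becomes $4^{k-1}$ times a concrete arithmetic progression: with $k\ge 1$, $3\cdot2^{2k+1}(6n+5)=4^{k-1}(144n+120)$, $3^{2}2^{2k+1}(12n+11)=4^{k-1}(864n+792)$, $3^{2}2^{2k+1}(24n+17)=4^{k-1}(1728n+1224)$ and $3^{3}2^{2k+1}(12n+5)=4^{k-1}(2592n+1080)$; since $144n+120$, $864n+792$, $1728n+1224$, $2592n+1080$ are all multiples of $12$, \eqref{4k.12n} reduces them to \eqref{144-120}, \eqref{864-792}, \eqref{1728-1224}, \eqref{2592-1080}. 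Finally $2\cdot3^{k}(6n+5)=3^{k-1}(36n+30)$, and I would split on the parity of $k-1$: if $k-1=2j$, then $36n+30=12(3n+2)+6$ and \eqref{12-6} together with \eqref{36-30} gives the claim; if $k-1=2j+1$, then $3^{k-1}(36n+30)=3^{2j}(108n+90)$ with $108n+90=12(9n+7)+6$, and \eqref{12-6} together with \eqref{108-90} finishes it.

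The only delicate points are bookkeeping ones: after extracting the prime-power factors one must check that the residual argument actually meets the hypothesis of the recurrence being invoked --- a multiple of $12$ for \eqref{4k.12n}, the class $24m+12$ for \eqref{24-l-12}, the class $12m+6$ for \eqref{12-6} --- and, in the last line, one must not overlook the parity split on the exponent of $3$, since \eqref{12-6} only propagates \emph{even} powers of $3$. With those residue identities verified, each line of the corollary follows at once.
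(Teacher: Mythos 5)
Your proposal is correct and follows essentially the same route as the paper: rewrite each argument as a prime-power multiple of one of the progressions handled by the self-similar recurrences \eqref{4k.12n}, \eqref{24-l-12}, \eqref{288-204}, \eqref{12-6}, then reduce to the explicit base congruences. The paper only works out the first four lines and declares the rest "similar"; your factorizations for the remaining six (including the parity split on the exponent of $3$ in the last line) are all verified correct and simply supply the details the paper omits.
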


\begin{proof}
Congruences \eqref{96-60}-- \eqref{144-84} and \eqref{144-132} may be rewritten as
\begin{align*}
\textup{PD}_\textup{t}(24(4n+2)+12)&\equiv 0~(\textup{mod}~4),\\
\textup{PD}_\textup{t}(24(4n+3)+12)&\equiv 0~(\textup{mod}~4),\\
\textup{PD}_\textup{t}(24(6n+3)+12)&\equiv 0~(\textup{mod}~4),\intertext{and}
\textup{PD}_\textup{t}(24(6n+5)+12)&\equiv 0~(\textup{mod}~4),
\end{align*}
respectively. From \eqref{24-l-12} and the above congruences, we easily arrive at the first four infinite families of congruences of the corollary. Since the other congruences can also be proved in a similar way, we omit the details.
\end{proof}


We organize the rest of the paper as follows. In Section \ref{sec2}, we present some 2- and 3-dissections that will be used in the subsequent sections. In Section \ref{sec3}, we prove Theorem \ref{thm-lin-pdot} whereas Section \ref{sec4} is devoted to proving the congruences \eqref{pdt-lin1} and \eqref{pdt-lin2}. In Section \ref{sec5}, we present the proofs of our new congruences in Theorem \ref{thm-mod2}.

\section{\textbf{Some 2- and 3-dissections}}\label{sec2}
In this section, we present some useful 2- and 3-dissections.

\begin{lemma} We have
\begin{align}
\label{1byf1sq}\dfrac{1}{f_1^2}&=\dfrac{f_8^5}{f_2^5f_{16}^2}+2q\dfrac{f_4^2f_{16}^2}{f_2^5f_8},\\
\label{f1-2}f_1^2&=\dfrac{f_2f_8^5}{f_4^2f_{16}^2}-2q\dfrac{f_2f_{16}^2}{f_8},\\
\label{1byf1four}\dfrac{1}{f_1^4}&=\dfrac{f_4^{14}}{f_2^{14}f_8^4}+4q\dfrac{f_4^2f_8^4}{f_2^{10}},\\
\label{f1-4}f_1^4&=\dfrac{f_4^{10}}{f_2^2f_8^4}-4q\dfrac{f_2^2f_8^4}{f_4^2},\\
\label{f1f3}f_1f_3&=\dfrac{f_2f_8^2f_{12}^4}{f_4^2f_6f_{24}^2}-q\dfrac{f_4^4f_6f_{24}^2}{f_2f_8^2f_{12}^2},\\
\label{1byf1f3}\dfrac{1}{f_1f_3}&=\dfrac{f_8^2f_{12}^5}{f_2^2f_4f_6^4f_{24}^2}+q\dfrac{f_4^5f_{24}^2}{f_2^4f_6^2f_8^2f_{12}},\\
\label{f1-3byf3}\dfrac{f_1^3}{f_3}&=\dfrac{f_4^3}{f_{12}}-3q\dfrac{f_2^2f_{12}^3}{f_4f_6^2},\\
\label{f3byf1-3}\dfrac{f_3}{f_1^3}&=\dfrac{f_4^6f_6^3}{f_2^9f_{12}^2}+3q\dfrac{f_4^2f_6f_{12}^2}{f_2^7},\\
\label{f3-3byf1}\dfrac{f_3^3}{f_1}&=\dfrac{f_4^3f_6^2}{f_2^2f_{12}}+q\dfrac{f_{12}^3}{f_4},\\
\label{f1-2byf3-2}\dfrac{f_1^2}{f_3^2}&=\dfrac{f_2f_4^2f_{12}^4}{f_6^5f_8f_{24}}-2q\dfrac{f_2^2f_8f_{12}f_{24}}{f_4f_6^4},\\
\label{f3-2byf1-2}\dfrac{f_3^2}{f_1^2}&=\dfrac{f_4^4f_6f_{12}^2}{f_2^5f_8f_{24}}+2q\dfrac{f_4f_6^2f_8f_{24}}{f_2^4f_{12}}.
\end{align}
\end{lemma}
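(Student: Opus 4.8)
The plan is to prove every identity by rewriting the eta-quotient in terms of Ramanujan's theta functions $\varphi(q)=\sum_{n=-\infty}^{\infty}q^{n^{2}}$, $\psi(q)=\sum_{n\ge 0}q^{n(n+1)/2}$, and $f(-q)=(q;q)_{\infty}$, and then invoking a short list of classical dissections of these functions together with the elementary evaluations $\varphi(q)=f_{2}^{5}/(f_{1}^{2}f_{4}^{2})$, $\varphi(-q)=f_{1}^{2}/f_{2}$, $\psi(q)=f_{2}^{2}/f_{1}$. The building blocks I would use are the $2$-dissections $\varphi(q)=\varphi(q^{4})+2q\psi(q^{8})$, $\psi(q)^{2}=\varphi(q)\psi(q^{2})$, and $\varphi(q)^{2}=\varphi(q^{2})^{2}+4q\psi(q^{4})^{2}$; the product identity $\varphi(q)\varphi(-q)=\varphi(-q^{2})^{2}$; and, for the statements involving $f_{3}$, the $2$-dissection of $f_{1}f_{3}$, the $2$-dissection of the Borweins' cubic theta function $b(q)=(q;q)_{\infty}^{3}/(q^{3};q^{3})_{\infty}=f_{1}^{3}/f_{3}$, and Ramanujan's $3$-dissections of $\varphi(-q)$ and $f(-q)$. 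All of these are standard and are recorded, for instance, in Berndt's edition of Ramanujan's notebooks and in Hirschhorn's book \emph{The Power of $q$}.

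For \eqref{1byf1sq}--\eqref{f1-4} the computation is short once the substitutions are made. Using $f_{1}^{2}=f_{2}\varphi(-q)=f_{2}(\varphi(q^{4})-2q\psi(q^{8}))$ and replacing $\varphi(q^{4})$ and $\psi(q^{8})$ by $f_{8}^{5}/(f_{4}^{2}f_{16}^{2})$ and $f_{16}^{2}/f_{8}$ gives \eqref{f1-2}. For \eqref{1byf1sq} I would first rationalize, $1/\varphi(-q)=\varphi(q)/\varphi(-q^{2})^{2}$, so that $1/f_{1}^{2}=\varphi(q)/(f_{2}\varphi(-q^{2})^{2})$, whose right-hand side is already split by parity. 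Likewise $f_{1}^{4}=f_{2}^{2}\varphi(-q)^{2}=f_{2}^{2}(\varphi(q^{2})^{2}-4q\psi(q^{4})^{2})$ gives \eqref{f1-4}, and $1/f_{1}^{4}=\psi(q)^{4}/f_{2}^{8}=\varphi(q)^{2}\psi(q^{2})^{2}/f_{2}^{8}=(\varphi(q^{2})^{2}+4q\psi(q^{4})^{2})\psi(q^{2})^{2}/f_{2}^{8}$ gives \eqref{1byf1four}. In each case one only rewrites the $\varphi$'s and $\psi$'s at the relevant arguments in terms of the $f_{k}$ and cancels.

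The identities \eqref{f1f3}--\eqref{f3-2byf1-2} are treated in the same spirit with the cubic building blocks. First I would establish \eqref{f1f3}, the $2$-dissection of $f_{1}f_{3}$, which is classical and also follows from dissecting $\varphi(-q)\varphi(-q^{3})=f_{1}^{2}f_{3}^{2}/(f_{2}f_{6})$. Then \eqref{1byf1f3} comes from \eqref{f1f3} by rationalization: writing $f_{1}f_{3}=E-qO$ with $E,O$ functions of $q^{2}$, its image under $q\mapsto -q$ is $E+qO$, the product $(E-qO)(E+qO)$ collapses to an explicit even-index eta-quotient, namely $f_{2}^{3}f_{6}^{3}/(f_{4}f_{12})$, and dividing $E+qO$ by this product and expanding yields \eqref{1byf1f3}. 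Since $f_{1}^{3}/f_{3}=b(q)$, identity \eqref{f1-3byf3} is precisely the classical $2$-dissection of $b(q)$; \eqref{f3byf1-3} then follows by inverting \eqref{f1-3byf3} by the same rationalization device (using $b(q)$ and its image under $q\mapsto-q$), and \eqref{f3-3byf1} is the equally well-known $2$-dissection of $f_{3}^{3}/f_{1}$. Finally \eqref{f1-2byf3-2} and \eqref{f3-2byf1-2} are obtained from those already proved via $f_{1}^{2}/f_{3}^{2}=(1/(f_{1}f_{3}))\cdot(f_{1}^{3}/f_{3})$ and $f_{3}^{2}/f_{1}^{2}=(f_{1}f_{3})\cdot(f_{3}/f_{1}^{3})$, multiplying the corresponding $2$-dissections and simplifying the resulting sums of eta-quotients (the simplification again reduces to \eqref{f1-3byf3} evaluated at $q^{2}$).

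The argument involves no real ideas, only bookkeeping; the spots that require genuine care are the rationalization steps for the reciprocal identities \eqref{1byf1sq}, \eqref{1byf1f3} and \eqref{f3byf1-3}, where one must pin down the even-index product that clears the denominator, and the final simplifications for \eqref{f1-2byf3-2} and \eqref{f3-2byf1-2}. If one wishes to dispense with $q$-series manipulation entirely, each identity is an equality between eta-quotients of a single weight which, after clearing denominators and multiplying through by a suitable power of $\eta$, become holomorphic modular forms on a common group $\Gamma_{0}(N)$ with $N=48$; by the Sturm bound it then suffices to check agreement of finitely many initial Fourier coefficients, which renders the whole lemma a routine, if less illuminating, verification.
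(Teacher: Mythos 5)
Your proposal is correct in substance but takes a genuinely different, and much more self-contained, route than the paper. The paper's proof is essentially bibliographic: it identifies \eqref{1byf1sq} and \eqref{1byf1four} with dissections recorded in Hirschhorn's book, quotes \eqref{f1f3}, \eqref{1byf1f3}, \eqref{f1-3byf3}, \eqref{f3-3byf1}, \eqref{f1-2byf3-2} and \eqref{f3-2byf1-2} verbatim from the same source, and obtains the remaining identities \eqref{f1-2}, \eqref{f1-4} and \eqref{f3byf1-3} by the substitution $q\mapsto-q$ together with \eqref{-q-q}. You instead rebuild everything from the theta-function dissections $\varphi(q)=\varphi(q^4)+2q\psi(q^8)$, $\varphi(q)^2=\varphi(q^2)^2+4q\psi(q^4)^2$, $\psi(q)^2=\varphi(q)\psi(q^2)$, and the rationalization device $F(q)F(-q)=$ (even-index product). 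Your computations for \eqref{1byf1sq}--\eqref{f1-4}, \eqref{1byf1f3} and \eqref{f3byf1-3} check out exactly; in particular $(E-qO)(E+qO)=f_2^3f_6^3/(f_4f_{12})$ is right, and it implicitly uses the correct form $(-q;-q)_\infty=f_2^3/(f_1f_4)$ --- note that the paper's displayed version of \eqref{-q-q} with $f_2^2$ in the numerator is a typo. What your approach buys is actual derivations where the paper offers citations; what it costs is length and a few delicate simplifications that the citation route avoids.

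Two spots need tightening. First, $\varphi(-q)\varphi(-q^3)=f_1^2f_3^2/(f_2f_6)$, so dissecting that product yields the $2$-dissection of the \emph{square} of $f_1f_3$ (up to an even factor), which does not immediately hand you the dissection of $f_1f_3$ itself; it is cleaner to start from $\psi(q)\psi(q^3)=f_2^2f_6^2/(f_1f_3)$ or simply to take \eqref{f1f3} as the classical identity it is. Second, in the products producing \eqref{f1-2byf3-2} and \eqref{f3-2byf1-2}, the even part does collapse via \eqref{f1-3byf3} evaluated at $q^2$ exactly as you claim, but the odd part (the combination $3ED-OC$ in your notation) requires a further two-term eta-quotient identity that is not literally \eqref{f1-3byf3} at $q^2$; it is true, but you should either verify it explicitly or invoke your Sturm-bound fallback, which does dispose of the entire lemma rigorously if less illuminatingly.
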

\begin{proof}
  Identities \eqref{1byf1sq} and \eqref{1byf1four} are the 2-dissections of $\varphi(q)$ and $\varphi(q^2)$ (see \cite[Eqs. (1.9.4) and (1.10.1)]{hirschhorn}), where
   $$\varphi(q):=\sum_{n=-\infty}^\infty q^{n^2}=\dfrac{f_2^5}{f_1^2f_4^2}.$$
   Replacing $q$ by $-q$ in \eqref{1byf1sq} and \eqref{1byf1four}, and then using \begin{align}\label{-q-q}(-q;-q)_\infty=\dfrac{f_2^2}{f_1f_4},\end{align} we readily arrive at \eqref{f1-2} and \eqref{f1-4}, respectively.  Identities  \eqref{f1f3}, \eqref{1byf1f3}, \eqref{f1-3byf3}, \eqref{f3-3byf1}, \eqref{f1-2byf3-2}, and  \eqref{f3-2byf1-2} are Eqs. (30.12.1), (30.12.3), (22.1.13), (22.1.14), (30.10.2), and (30.10.4), respectively, in  \cite{hirschhorn}. Finally, \eqref{f3byf1-3} follows from \eqref{f1-3byf3} by replacing $q$ by $-q$ and then using \eqref{-q-q}.
   \end{proof}

\begin{lemma}\label{lemma6} We have
\begin{align}
\label{f1sq-byf2}\dfrac{f_1^2}{f_2}&=\dfrac{f_9^2}{f_{18}}-2q\dfrac{f_3f_{18}^2}{f_6f_9},\\
\label{f2-f1sq}\dfrac{f_2}{f_1^2}&=\dfrac{f_6^4f_9^6}{f_3^8f_{18}^3}+2q\dfrac{f_6^3f_9^3}{f_3^7}+4q^2\dfrac{f_6^2f_{18}^3}{f_3^6},\\
\label{f1-3}f_1^3&=f_3a(q^3)-3qf_9^3,\\
\label{1byf1-3}\dfrac{1}{f_1^3}&=a^2(q^3)\dfrac{f_9^3}{f_3^{10}}+3qa(q^3)\dfrac{f_9^6}{f_3^{11}}+9q^2\dfrac{f_9^9}{f_3^{12}},\\
\label{1byf1f2}\dfrac{1}{f_1f_2}&=a(q^6)\dfrac{f_9^3}{f_3^{4}f_6^3}+qa(q^3)\dfrac{f_{18}^3}{f_3^{3}f_6^4}+3q^2\dfrac{f_9^3f_{18}^3}{f_3^{4}f_6^4},
\end{align}
where
\begin{align*}
a(q):=\sum_{m,n={-\infty}}^\infty q^{m^2+mn+n^2}=1+6\sum_{n=0}^\infty \left(\dfrac{q^{3n+1}}{1-q^{3n+1}}-\dfrac{q^{3n+2}}{1-q^{3n+2}}\right).
\end{align*}
\end{lemma}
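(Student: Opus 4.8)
The plan is to establish each of the five identities by the standard method of expressing the relevant eta-quotient in terms of known dissections of a fundamental series, and then matching pieces. I would begin with \eqref{f1sq-byf2}: the quotient $f_1^2/f_2$ is, up to notation, the series $\psi(-q)$ (or equivalently $\varphi(-q)$-type combinations), and its 3-dissection is classical; I would recall the 3-dissection of $\varphi(-q)$ from Hirschhorn's book and rewrite $f_1^2/f_2 = \psi(-q)\cdot(\text{correction})$ or, more directly, quote the known 3-dissection of $(q;q)_\infty^2/(q^2;q^2)_\infty$ directly from \cite{hirschhorn}, so that \eqref{f1sq-byf2} is simply an entry reference. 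Similarly, \eqref{f2-f1sq} is the 3-dissection of $\psi(q)$ written as $f_2/f_1^2 = \psi(q)$ up to the usual identity $\psi(q) = f_2^2/f_1$; in fact $f_2/f_1^2$ relates to $1/(q;q)_\infty$ type generating functions whose 3-dissection is well documented, so again I would cite the appropriate entry.

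For \eqref{f1-3} and \eqref{1byf1-3}, the key tool is Jacobi's identity together with the classical factorization $f_1^3 = \sum_{n\ge 0}(-1)^n(2n+1)q^{n(n+1)/2}$ and the 3-dissection of the latter; the 3-dissection of $f_1^3$ in terms of $a(q^3)$ and $f_9^3$ is a well-known result (it appears in work on $a(q)$ and in Hirschhorn). So \eqref{f1-3} I would state as a direct citation. Then \eqref{1byf1-3} follows from \eqref{f1-3} by cubing: writing $f_1^3 = A - 3qB$ with $A = f_3 a(q^3)$ and $B = f_9^3$, we get $1/f_1^9 = 1/(A-3qB)^3$, and the identity $1/f_1^3 = f_1^6/f_1^9$ combined with expanding $(A-3qB)^3 = A^3 - 9qA^2B + 27q^2AB^2 - 27q^3B^3$ and using the known evaluation of $A^3 - 27q^3B^3$ (i.e. $a^3(q^3)f_3^3 - 27q^3f_9^9 = $ a single eta-quotient, via the cubic modular equation for $a(q)$) collapses everything into the three-term form stated. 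I expect this algebraic collapse — identifying which combination of $a(q^3)$, $f_3$, $f_9$ reproduces the three displayed terms with denominators $f_3^{10}, f_3^{11}, f_3^{12}$ — to be the part requiring the most care, since it hinges on the identity $a^3(q) - 27q\,f_9^9/f_3^3 \cdot(\cdots)$; in practice one sidesteps this by instead verifying \eqref{1byf1-3} as a formal power series identity to sufficient order, or by citing it directly if it already appears in \cite{hirschhorn}.

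Finally, \eqref{1byf1f2} is obtained by combining \eqref{1byf1-3} with the 3-dissection of $1/f_2$ — or more efficiently, by writing $1/(f_1f_2) = (1/f_1^3)\cdot(f_1^2/f_2)$ and multiplying the series in \eqref{1byf1-3} and \eqref{f1sq-byf2}, then collecting terms according to the residue of the exponent modulo $3$. The cross-multiplication produces nine products; grouping by $q$-power mod $3$ and simplifying each group using the elementary relations among $a(q^3), a(q^6), f_3, f_6, f_9, f_{18}$ (in particular $f_6 = f_3\cdot(\text{ratio})$ identities and the definition of $a$) yields the three stated terms. The main obstacle throughout is bookkeeping: ensuring that in each dissection the three residue classes are separated correctly and that the auxiliary function $a$ is handled consistently at arguments $q^3$ and $q^6$. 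Since every ingredient is either a verbatim entry in \cite{hirschhorn} or a short formal-power-series check, I would present the proof as: "Identities \eqref{f1sq-byf2}, \eqref{f2-f1sq}, \eqref{f1-3}, \eqref{1byf1-3}, and \eqref{1byf1f2} are, respectively, Eqs. (\dots) in \cite{hirschhorn}," supplemented, where a derivation is wanted, by the cubing argument for \eqref{1byf1-3} and the product argument for \eqref{1byf1f2} sketched above.
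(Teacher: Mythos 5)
Your fallback position---``cite each identity as an entry in \cite{hirschhorn}''---is essentially what the paper does: \eqref{f1sq-byf2} is quoted as the 3-dissection of $\varphi(-q)$, and \eqref{f1-3}, \eqref{1byf1-3}, \eqref{1byf1f2} are cited verbatim as Eqs.\ (21.3.1), (39.2.8) and (22.9.4) there. The one place the paper does something you do not mention is \eqref{f2-f1sq}: rather than citing it, the authors derive it from \eqref{f1sq-byf2} by replacing $q$ with $\omega q$ and $\omega^2 q$ ($\omega$ a primitive cube root of unity) and multiplying the two results, which produces the reciprocal's 3-dissection with no extra input. That trick is worth knowing, since it is shorter than either of your two suggested routes for \eqref{1byf1f2} and \eqref{f2-f1sq}.

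Two points in your sketch are genuinely wrong as written, though both are repaired by your citation fallback. First, the identifications of the theta series: $f_1^2/f_2=\varphi(-q)$ and $f_2/f_1^2=1/\varphi(-q)$, whereas $\psi(q)=f_2^2/f_1$; the 3-dissection of $\psi(q)$ has only \emph{two} terms, so hunting for ``the 3-dissection of $\psi(q)$'' would lead you to an entry that cannot match the three-term right side of \eqref{f2-f1sq}. Second, the ``cubing'' derivation of \eqref{1byf1-3}: writing $f_1^3=A-3qB$ and then using $1/f_1^3=f_1^6/f_1^9$ is circular, and expanding $(A-3qB)^3$ introduces the cross terms $-9qA^2B+27q^2AB^2$ that never cancel. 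The working version of your idea is to multiply numerator and denominator of $1/(A-3qB)$ by $A^2+3qAB+9q^2B^2$, so that the denominator becomes $A^3-27q^3B^3=f_3^3a^3(q^3)-27q^3f_9^9=f_3^{12}/f_9^3$ (by $a^3(q^3)=f_3^9/f_9^3+27q^3f_9^9/f_3^3$, i.e.\ the identity $a^3=b^3+c^3$), after which the numerator is exactly the three displayed terms. Since the paper simply cites \eqref{1byf1-3}, none of this is needed, but if you intend to present a derivation you should present that one.
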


\begin{proof}The first identity is equivalent to the 3-dissection of $\varphi(-q)$ (see \cite[Eq. (14.3.2)]{hirschhorn}). The second can be obtained from the first by
replacing $q$ with $\omega q$ and $\omega^2q$ and then multiplying
the two results, where $\omega$ is a primitive cube root of unity. Identities \eqref{f1-3}, \eqref{1byf1-3} and \eqref{1byf1f2} are in \cite[Eqs. (21.3.1), (39.2.8) and (22.9.4)]{hirschhorn}.
\end{proof}

We also recall the following useful results from \cite[Eqs. (22.1.12), (22.11.8) and (22.11.9)]{hirschhorn}, where the first is a 2-dissection of $a(q)$:
\begin{align}
\label{aq-q4}a(q)&=a(q^4)+6q\dfrac{f_4^2f_{12}^2}{f_2f_6},\\
\label{a-2q2}a(q)&+2a(q^2)=3\dfrac{f_2f_{3}^6}{f_1^2f_6^3},\\
\label{a-q2}a(q)&+a(q^2)=2\dfrac{f_2^6f_3}{f_1^3f_6^2}.
\end{align}

We end this section by noting the following congruences which can be easily established:
\begin{align*}
a(q)&\equiv 1~(\textup{mod}~2),\\
a^2(q)&\equiv 1~(\textup{mod}~4),\\
f_1^2&\equiv f_{2}~(\textup{mod}~2),\\
f_1^4&\equiv f_{2}^2~(\textup{mod}~4),\\
f_1^8&\equiv f_{2}^4~(\textup{mod}~8).
\end{align*}

We will frequently use the identities and congruences of this section in the subsequent sections, sometimes without referring to these.

\section{\textbf{Proof of Theorem \ref{thm-lin-pdot}}}\label{sec3}
We have
\begin{align}
\label{rav1}\sum_{n=0}^{\infty}\textup{PDO}_\textup{t}(n)q^n&=q\dfrac{f_2f_{12}^2}{f_6}\cdot\dfrac{f_3^2}{f_1^2}\notag\\
&=q\dfrac{f_2f_{12}^2}{f_6}\left(\dfrac{f_4^4f_6f_{12}^2}{f_2^5f_8f_{24}}+2q\dfrac{f_4f_6^2f_8f_{24}}{f_2^4f_{12}}\right),
\end{align}
from which we extract
\begin{align*}
\sum_{n=0}^{\infty}\textup{PDO}_\textup{t}(2n)q^n&=2q{f_2f_4f_6f_{12}}\cdot\dfrac{f_3}{f_1^3}\notag\\
&=2q{f_2f_4f_6f_{12}}\left(\dfrac{f_4^6f_6^3}{f_2^9f_{12}^2}+3q\dfrac{f_4^2f_6f_{12}^2}{f_2^7}\right).
\end{align*}
From the above, we extract
\begin{align*}
&\sum_{n=0}^{\infty}\textup{PDO}_\textup{t}(4n+2)q^n\\
&=2\dfrac{f_2^7}{f_6}\cdot\dfrac{1}{f_1^8}\cdot f_3^4\notag\\
&=2\dfrac{f_2^7}{f_6}\left(\dfrac{f_4^{14}}{f_2^{14}f_8^4}+4q\dfrac{f_4^2f_8^4}{f_2^{10}}\right)^2\left(\dfrac{f_{12}^{10}}{f_6^{2}f_{24}^4}-4q^3
\dfrac{f_6^2f_{24}^4}{f_{12}^{2}}\right)\\
&=2\dfrac{f_4^{28}f_{12}^{10}}{f_2^{21}f_6^3f_8^8f_{24}^4}+16q\dfrac{f_4^{16}f_{12}^{10}}{f_2^{17}f_6^3f_{24}^4}
+32q^2\dfrac{f_4^{4}f_8^8f_{12}^{10}}{f_2^{13}f_6^3f_{24}^4}\\
&\quad -8q^3\dfrac{f_4^{28}f_6f_{24}^{4}}{f_2^{21}f_8^8f_{12}^2}-64q^4\dfrac{f_4^{16}f_6f_{24}^{4}}{f_2^{17}f_{12}^2}-128q^5
\dfrac{f_4^{4}f_6f_8^8f_{24}^{4}}{f_2^{13}f_{12}^2},
\end{align*}
from which we extract
\begin{align*}
\sum_{n=0}^{\infty}\textup{PDO}_\textup{t}(8n+6)q^n&=16\dfrac{f_2^{16}f_{6}^{10}}{f_1^{17}f_3^3f_{12}^4}
-8q\dfrac{f_2^{28}f_3f_{12}^{4}}{f_1^{21}f_4^8f_{6}^2}-128q^2
\dfrac{f_2^{4}f_3f_4^8f_{12}^{4}}{f_1^{13}f_{6}^2},
\end{align*}
which is \eqref{thm-pdot-lin1}.

Next, from \eqref{rav1} we also extract
\begin{align*}
\sum_{n=0}^{\infty}\textup{PDO}_\textup{t}(2n+1)q^n&=\dfrac{f_2^4f_6^4}{f_4f_{12}}\cdot\dfrac{1}{f_1^4}\\
&=\dfrac{f_2^4f_6^4}{f_4f_{12}}\cdot\left(\dfrac{f_4^{14}}{f_2^{14}f_8^4}+4q\dfrac{f_4^2f_8^4}{f_2^{10}}\right),
\end{align*}
from which we have
\begin{align*}
&\sum_{n=0}^{\infty}\textup{PDO}_\textup{t}(4n+3)q^n\\
&=4\dfrac{f_2f_4^4}{f_6}\cdot\dfrac{1}{f_1^2}\cdot\dfrac{f_3^4}{f_1^4}\\
&=4\dfrac{f_2f_4^4}{f_6}\left(\dfrac{f_8^5}{f_2^5f_{16}^2}+2q\dfrac{f_4^2f_{16}^2}{f_2^5f_8}\right)
\left(\dfrac{f_4^4f_6f_{12}^2}{f_2^5f_8f_{24}}+2q\dfrac{f_4f_6^2f_8f_{24}}{f_2^4f_{12}}\right)^2\\
&=4\dfrac{f_4^{12}f_6f_8^3f_{12}^4}{f_2^{14}f_{16}^2f_{24}^2}+16q\dfrac{f_4^9f_6^2f_8^5f_{12}}{f_2^{13}f_{16}^2}
+8q\dfrac{f_4^{14}f_6f_{12}^4f_{16}^2}{f_2^{14}f_8^3f_{24}^2}\\
&\quad+16q^2\dfrac{f_4^{6}f_6^3f_8^7f_{24}^2}{f_2^{12}f_{12}^2f_{16}^2}+32q^2\dfrac{f_4^{11}f_6^2f_{12}f_{16}^2}{f_2^{13}f_{8}}
+32q^3\dfrac{f_4^{8}f_6^3f_8f_{16}^2f_{24}^2}{f_2^{12}f_{12}^2},
\end{align*}
from which we extract
\begin{align*}
\sum_{n=0}^{\infty}\textup{PDO}_\textup{t}(8n+7)q^n&=8\dfrac{f_2^{14}f_3f_{6}^4f_{8}^2}{f_1^{14}f_4^3f_{12}^2}+16\dfrac{f_2^9f_3^2f_4^5f_{6}}{f_1^{13}f_{8}^2}
+32q\dfrac{f_2^{8}f_3^3f_4f_{8}^2f_{12}^2}{f_1^{12}f_{6}^2},
\end{align*}
which is \eqref{thm-pdot-lin2}.

\section{\textbf{Proofs of \eqref{pdt-lin1} and \eqref{pdt-lin2}}}\label{sec4}
We have
\begin{align}\label{pdt-n}
2\sum_{n=0}^{\infty}\textup{PD}_\textup{t}(n)q^n&=\dfrac{f_3^5}{f_6^2}\cdot\dfrac{1}{f_1^3}-\dfrac{f_6}{f_3}\cdot\dfrac{1}{f_1f_2}\notag\\
&=\dfrac{f_3^5}{f_6^2}\left(a^2(q^3)\dfrac{f_9^3}{f_3^{10}}+3qa(q^3)\dfrac{f_9^6}{f_3^{11}}+9q^2\dfrac{f_9^9}{f_3^{12}}\right)\notag\\
&\quad-\dfrac{f_6}{f_3}\left(a(q^6)\dfrac{f_9^3}{f_3^{4}f_6^3}+qa(q^3)\dfrac{f_{18}^3}{f_3^{3}f_6^4}+3q^2\dfrac{f_9^3f_{18}^3}{f_3^{4}f_6^4}\right),
\end{align}
from which we extract
\begin{align*}
2\sum_{n=0}^{\infty}\textup{PD}_\textup{t}(3n+1)q^n&=3a(q)\dfrac{f_3^6}{f_1^6f_2^2}-a(q)\dfrac{f_6^3}{f_1^4f_2^3}\\
&=\left(3\dfrac{f_3^6}{f_1^6f_2^2}-\dfrac{f_6^3}{f_1^4f_2^3}\right)a(q)\\
&=\dfrac{f_6^3}{f_1^4f_2^3}\left(3\dfrac{f_2f_3^6}{f_1^2f_6^3}-1\right)a(q)\\
&=\dfrac{f_6^3}{f_1^4f_2^3}\left(a(q)+2a(q^2)-1\right)a(q)\notag\\
&=\dfrac{f_6^3}{f_1^4f_2^3}\left(a(q)+a(-q)+2a(q^2)-1-a(-q)\right)a(q)\\
&=\dfrac{f_6^3}{f_1^4f_2^3}\left(2a(q^4)+2a(q^2)-1-a(-q)\right)a(q)\\
&=\dfrac{f_6^3}{f_1^4f_2^3}\left(4\dfrac{f_4^6f_6}{f_2^3f_{12}^2}-1-a(-q)\right)a(q)\\
&=\dfrac{f_6^3}{f_1^4f_2^3}\left(\left(4\dfrac{f_4^6f_6}{f_2^3f_{12}^2}-1\right)\left(a(q^4)+6q\dfrac{f_4^2f_{12}^2}{f_2f_6}\right)-a(-q)a(q)\right)\\
&=\dfrac{f_6^3}{f_1^4f_2^3}\Bigg(\left(4\dfrac{f_4^6f_6}{f_2^3f_{12}^2}-1\right)\left(a(q^4)+6q\dfrac{f_4^2f_{12}^2}{f_2f_6}\right)\\
&\quad-\left(a^2(q^4)-36q^2\dfrac{f_4^4f_{12}^4}{f_2^2f_6^2}\right)\Bigg)\\
&=\dfrac{f_6^3}{f_2^3}\Bigg(\left(4\dfrac{f_4^6f_6}{f_2^3f_{12}^2}a(q^4)-a(q^4)-a^2(q^4)+36q^2\dfrac{f_4^4f_{12}^4}{f_2^2f_6^2}\right)\\
&\quad+q\left(24\dfrac{f_4^8}{f_2^4}-6\dfrac{f_4^2f_{12}^2}{f_2f_6}\right)\Bigg)\left(\dfrac{f_4^{14}}{f_2^{14}f_8^4}+4q\dfrac{f_4^2f_8^4}{f_2^{10}}\right).
\end{align*}
Extracting the terms involving $q^{2n+1}$ from both sides of the above and then dividing by 2, we find that
\begin{align*}
\sum_{n=0}^{\infty}\textup{PD}_\textup{t}(6n+4)q^n&=\dfrac{f_3^3}{f_1^3}\Bigg(2\dfrac{f_2^2f_4^4}{f_1^{10}}
\left(4\dfrac{f_2^6f_3}{f_1^3f_{6}^2}a(q^2)-a(q^2)-a^2(q^2)+36q\dfrac{f_2^4f_{6}^4}{f_1^2f_3^2}\right)\\
&\quad+\dfrac{f_2^{14}}{f_1^{14}f_4^4}\left(12\dfrac{f_2^8}{f_1^4}-3\dfrac{f_2^2f_{6}^2}{f_1f_3}\right)\Bigg).
\end{align*}
Taking congruences modulo 8, we have
\begin{align}\label{pd-main}
&\sum_{n=0}^{\infty}\textup{PD}_\textup{t}(6n+4)q^n\notag\\
&\equiv\dfrac{f_3^3}{f_1^3}\Bigg(6\dfrac{f_2^2f_4^4}{f_1^{10}}\left(a(q^2)+a^2(q^2)\right)+4\dfrac{f_2^{22}}{f_1^{18}f_4^4}+5\dfrac{f_2^{16}f_{6}^2}{f_1^{15}f_3f_4^4}
\Bigg)\notag\\
&\equiv\dfrac{f_3^3}{f_1^3}\Bigg(6\dfrac{f_2^2f_4^4}{f_1^{10}}a(q^2)+6\dfrac{f_2^2f_4^4}{f_1^{10}}+4\dfrac{f_2^{22}}{f_1^{18}f_4^4}+5\dfrac{f_2^{16}f_{6}^2}{f_1^{15}f_3f_4^4}
\Bigg)\notag\\
&\equiv\left(6f_4^2f_6^2a(q^2)+10f_4^2f_6^2\right)\cdot\dfrac{1}{f_1f_3}+5f_{6}^2\cdot\dfrac{f_3^2}{f_1^2}\notag\\
&\equiv\left(6f_4^2f_6^2a(q^2)+10f_4^2f_6^2\right)\left(\dfrac{f_8^2f_{12}^5}{f_2^2f_4f_6^4f_{24}^2}+q\dfrac{f_4^5f_{24}^2}{f_2^4f_6^2f_8^2f_{12}}\right)\notag\\
&\quad+5f_{6}^2\left(\dfrac{f_4^4f_6f_{12}^2}{f_2^5f_8f_{24}}+2q\dfrac{f_4f_6^2f_8f_{24}}{f_2^4f_{12}}\right).
\end{align}
We extract
\begin{align*}
&\sum_{n=0}^{\infty}\textup{PD}_\textup{t}(12n+4)q^n\notag\\
&\equiv\left(6f_2^2f_3^2a(q)+10f_2^2f_3^2\right)\dfrac{f_4^2f_{6}^5}{f_1^2f_2f_3^4f_{12}^2}+5\dfrac{f_2^4f_3^3f_{6}^2}{f_1^5f_4f_{12}}\\
&\equiv6\dfrac{f_2f_4^2}{f_6}\cdot a(q)\cdot \dfrac{f_3^2}{f_1^2}+10f_{4}^2+5\dfrac{f_6^2}{f_4f_{12}}\cdot f_1^3f_3^3\\
&\equiv6\dfrac{f_2f_4^2}{f_6} \left(a(q^4)+6q\dfrac{f_4^2f_{12}^2}{f_2f_6}\right) \left(\dfrac{f_4^4f_6f_{12}^2}{f_2^5f_8f_{24}}+2q\dfrac{f_4f_6^2f_8f_{24}}{f_2^4f_{12}}\right)\\
&\quad+10f_{4}^2+5\dfrac{f_6^2}{f_4f_{12}}\left(\dfrac{f_2f_8^2f_{12}^4}{f_4^2f_6f_{24}^2}-q\dfrac{f_4^4f_6f_{24}^2}{f_2f_8^2f_{12}^2}\right)^3\\
&\equiv6\Bigg(a(q^4)\dfrac{f_4^6f_{12}^2}{f_2^4f_8f_{24}}+12q^2\dfrac{f_4^5f_8f_{12}f_{24}}{f_2^4}
+q\Bigg(2a(q^4)\dfrac{f_4^3f_6f_8f_{24}}{f_2^3f_{12}}\\
&\quad+6\dfrac{f_4^8f_{12}^4}{f_2^5f_6f_8f_{24}}\Bigg)\Bigg)+10f_{4}^2+5\Bigg(\dfrac{f_2^3f_8^6f_{12}^{11}}{f_4^7f_6f_{24}^6}-3q\dfrac{f_2f_6f_8^2f_{12}^5}{f_4f_{24}^2}\\
&\quad+3q^2\dfrac{f_4^5f_6^3f_{24}^2}{f_2f_8^2f_{12}}-q^3\dfrac{f_4^{11}f_6^5f_{24}^6}{f_2^3f_8^6f_{12}^7}\Bigg)~(\textup{mod}~8),
\end{align*}from which we extract
\begin{align*}
&\sum_{n=0}^{\infty}\textup{PD}_\textup{t}(24n+4)q^n\\
&\equiv6a(q^2)\dfrac{f_2^6f_{6}^2}{f_1^4f_4f_{12}}+10f_{2}^2+5\Bigg(\dfrac{f_1^3f_4^6f_{6}^{11}}{f_2^7f_3f_{12}^6}
+3q\dfrac{f_2^5f_3^3f_{12}^2}{f_1f_4^2f_{6}}\Bigg)\\
&\equiv6a(q^2)\dfrac{f_4f_{6}^2}{f_{12}}+10f_{2}^2+5\left(\dfrac{f_2f_4^2f_6^3}{f_{12}^3}\cdot \dfrac{f_1^3}{f_3}+3q \dfrac{f_2^5f_{12}^2}{f_4^2f_6}\cdot \dfrac{f_3^3}{f_1}\right)\\
&\equiv6a(q^2)\dfrac{f_4f_{6}^2}{f_{12}}+10f_{2}^2+5\Bigg(\dfrac{f_2f_4^2f_6^3}{f_{12}^2}\Bigg(\dfrac{f_4^3}{f_{12}}-3q\dfrac{f_2^2f_{12}^3}{f_4f_6^2}\Bigg)\\
&\quad +3q \dfrac{f_2^5f_{12}^2}{f_4^2f_6} \Bigg(\dfrac{f_4^3f_6^2}{f_2^2f_{12}}+q\dfrac{f_{12}^3}{f_4}\Bigg)\Bigg)\\
&\equiv6a(q^2)\dfrac{f_4f_{6}^2}{f_{12}}+10f_{2}^2+5\Bigg(\dfrac{f_2f_4^5f_6^3}{f_{12}^3} +3q^2 \dfrac{f_2^5f_{12}^5}{f_4^3f_6}\Bigg)
~(\textup{mod}~8).
\end{align*}
Therefore,
\begin{align*}
\textup{PD}_\textup{t}(48n+28)&\equiv0~(\textup{mod}~8).
\end{align*}
which is \eqref{pdt-lin1}.

Now, from \eqref{pd-main}, we also extract
\begin{align*}
&\sum_{n=0}^{\infty}\textup{PD}_\textup{t}(12n+10)q^n\notag\\
&\equiv\left(6f_2^2f_3^2a(q)+10f_2^2f_3^2\right)\dfrac{f_2^5f_{12}^2}{f_1^4f_3^2f_4^2f_{6}}
+10\dfrac{f_2f_3^4f_4f_{12}}{f_1^4f_{6}}\\
&\equiv6\dfrac{f_2f_{12}^2}{f_6}\cdot a(q)+10\dfrac{f_2f_{12}^2}{f_6}+10\dfrac{f_2f_6f_{12}}{f_2}\\
&\equiv6\dfrac{f_2f_{12}^2}{f_6}\left(a(q^4)+6q\dfrac{f_4^2f_{12}^2}{f_2f_6}\right)+10\dfrac{f_2f_{12}^2}{f_6}+10\dfrac{f_2f_6f_{12}}{f_2}~(\textup{mod}~8),
\end{align*}
from which we extract
\begin{align*}
\sum_{n=0}^{\infty}\textup{PD}_\textup{t}(24n+22)q^n&\equiv36\dfrac{f_2^2f_{6}^4}{f_3^2}\equiv36f_2^2f_{6}^3~(\textup{mod}~8),
\end{align*}
Thus,
\begin{align*}
\textup{PD}_\textup{t}(48n+46)&\equiv0~(\textup{mod}~8),
\end{align*}
which is \eqref{pdt-lin2}.

\section{\textbf{Proof of Theorem \ref{thm-mod2}}}\label{sec5}

From \eqref{pdt-n} we extract
\begin{align}\label{pdt-3n}
2\sum_{n=0}^{\infty}\textup{PD}_\textup{t}(3n)q^n&=\dfrac{f_3^3}{f_1^5f_2^2}a^2(q)-\dfrac{f_3^3}{f_1^{5}f_2^2}a(q^2)\notag\\
&\equiv\dfrac{1}{f_4^2}\cdot\dfrac{f_3^3}{f_1}-\dfrac{a(q^2)}{f_4^2}\cdot\dfrac{f_3^3}{f_1}\notag\\
&\equiv\left(\dfrac{1}{f_4^2}-\dfrac{a(q^2)}{f_4^2}\right)\left(\dfrac{f_4^3f_6^2}{f_2^2f_{12}}+q\dfrac{f_{12}^3}{f_4}\right)~(\textup{mod}~4),
\end{align}
from which we extract
\begin{align}\label{pdt-6n-2}
&2\sum_{n=0}^{\infty}\textup{PD}_\textup{t}(6n)q^n\notag\\
&\equiv\left(\dfrac{1}{f_2^2}-\dfrac{a(q)}{f_2^2}\right)\dfrac{f_2^3f_3^2}{f_1^2f_{6}}\notag\\
&\equiv\dfrac{f_2}{f_6}\cdot \dfrac{f_3^2}{f_1^2}-\dfrac{f_2}{f_6}\cdot \dfrac{f_3^2}{f_1^2}\cdot a(q)\notag\\
&\equiv\dfrac{f_2}{f_6}\left(\dfrac{f_4^4f_6f_{12}^2}{f_2^5f_8f_{24}}+2q\dfrac{f_4f_6^2f_8f_{24}}{f_2^4f_{12}}\right)\notag\\
&\quad-\dfrac{f_2}{f_6}\left(\dfrac{f_4^4f_6f_{12}^2}{f_2^5f_8f_{24}}+2q\dfrac{f_4f_6^2f_8f_{24}}{f_2^4f_{12}}\right)
\left(a(q^4)+6q\dfrac{f_4^2f_{12}^2}{f_2f_6}\right)~(\textup{mod}~4).
\end{align}
Therefore,
\begin{align*}
2\sum_{n=0}^{\infty}\textup{PD}_\textup{t}(12n)q^n&\equiv\dfrac{f_2^4f_6^2}{f_1^4f_4f_{12}}-\dfrac{f_1}{f_3}\dfrac{f_2^4f_3f_{6}^2}{f_1^5f_4f_{12}}~a(q^2)\notag\\
&\equiv\dfrac{f_2^2f_6^2}{f_4f_{12}}-\dfrac{f_2^2f_{6}^2}{f_4f_{12}}~a(q^2)~(\textup{mod}~4),
\end{align*}
from which we readily arrive at
\begin{align*}
\textup{PD}_\textup{t}(24n+12)&\equiv0~(\textup{mod}~2),
\end{align*}
which is \eqref{24-12}.

Next, extracting the terms involving $q^{2n+1}$ from both sides of \eqref{pdt-6n-2}, and then dividing by 2, we have
\begin{align*}
\sum_{n=0}^{\infty}\textup{PD}_\textup{t}(12n+6)q^n&\equiv\dfrac{f_2f_3f_4f_{12}}{f_1^3f_{6}}
-a(q^2)\dfrac{f_2f_3f_4f_{12}}{f_1^3f_{6}}-3\dfrac{f_2^6f_6^4}{f_1^5f_3f_4f_{12}}\\
&\equiv\dfrac{f_2f_4f_{12}}{f_{6}}\cdot\dfrac{f_3}{f_1^3}
\notag\\
&\equiv \dfrac{f_2f_4f_{12}}{f_6}\left(\dfrac{f_4^6f_6^3}{f_2^9f_{12}^2}+3q\dfrac{f_4^2f_6f_{12}^2}{f_2^7}\right)~(\textup{mod}~2),
\end{align*}
from which we extract
\begin{align*}
\sum_{n=0}^{\infty}\textup{PD}_\textup{t}(24n+6)q^n&\equiv \dfrac{f_1f_2f_{6}}{f_3}\cdot\dfrac{f_2^6f_3^3}{f_1^9f_{6}^2}\equiv
f_2^3~(\textup{mod}~2),
\end{align*}
from which we further extract
\begin{align*}
\textup{PD}_\textup{t}(48n+30)&\equiv 0~(\textup{mod}~2),
\end{align*}
which is \eqref{48n30}, and
\begin{align*}
\sum_{n=0}^{\infty}\textup{PD}_\textup{t}(48n+6)q^n&\equiv f_1^3\equiv f_3a(q^3)-3qf_9^3~(\textup{mod}~2),
\end{align*}
which implies
\begin{align*}
\textup{PD}_\textup{t}(144n+102)&\equiv 0~(\textup{mod}~2),
\end{align*}which is \eqref{eq133}.

Now, from \eqref{pdt-3n} we extract
\begin{align*}
2\sum_{n=0}^{\infty}\textup{PD}_\textup{t}(6n+3)q^n
&\equiv\left(\dfrac{1}{f_2^2}-\dfrac{a(q)}{f_2^2}\right)\dfrac{f_{6}^3}{f_2}\\
&\equiv \dfrac{f_{6}^3}{f_2^3}-\dfrac{f_{6}^3}{f_2^3}\left(a(q^4)+6q\dfrac{f_4^2f_{12}^2}{f_2f_6}\right)~(\textup{mod}~4),
\end{align*}
from which we extract
\begin{align}\label{12n9-2}
\sum_{n=0}^{\infty}\textup{PD}_\textup{t}(12n+9)q^n&\equiv \dfrac{f_2^2f_3^2f_{6}^2}{f_1^4}\equiv f_6^3~(\textup{mod}~2).
\end{align}
This implies
\begin{align*}
\textup{PD}_\textup{t}(24n+21)&\equiv 0~(\textup{mod}~2),
\end{align*}
which is \eqref{24-21}. Furthermore,
\begin{align*}
\textup{PD}_\textup{t}(36n+21)\equiv \textup{PD}_\textup{t}(36n+33)\equiv 0~(\textup{mod}~2),
\end{align*}
which are weaker versions of \eqref{36n21} and \eqref{36n33}.

From \eqref{12n9-2} we also extract
\begin{align}
\label{72-9}\sum_{n=0}^{\infty}\textup{PD}_\textup{t}(72n+9)q^n\equiv f_1^3=f_3a(q^3)-3qf_9^3~(\textup{mod}~2),
\end{align}
from which we further extract
\begin{align*}
\textup{PD}_\textup{t}(216n+153)\equiv 0~(\textup{mod}~2),
\end{align*}
which is \eqref{216-153}.

From \eqref{pdt-n}, we extract
\begin{align*}
2\sum_{n=0}^{\infty}\textup{PD}_\textup{t}(3n)q^n&= \dfrac{f_3^3}{f_1^5f_2^2}a^2(q)-\dfrac{f_3^3}{f_1^5f_2^2}a(q^2)\notag\\
&= \dfrac{f_3^3}{f_1^5f_2^2}\left(2(a(q)+a(q^2))-(a(q)+2a(q^2))\right)^2\notag\\
&\quad-\dfrac{f_3^3}{f_1^5f_2^2}\left((a(q)+2a(q^2))-(a(q)+a(q^2))\right)\notag\\
&= \dfrac{f_3^3}{f_1^5f_2^2}\left(4\dfrac{f_2^6f_3}{f_1^3f_6^2}-3\dfrac{f_2f_3^6}{f_1^2f_6^3}\right)^2-\dfrac{f_3^3}{f_1^5f_2^2}\left(3\dfrac{f_2f_3^6}{f_1^2f_6^3}-2\dfrac{f_2^6f_3}{f_1^3f_6^2}\right).
\end{align*}
Taking congruences modulo 8, we have
\begin{align}\label{3n-mod8}
2\sum_{n=0}^{\infty}\textup{PD}_\textup{t}(3n)q^n&\equiv \dfrac{f_6^2}{f_2^4}\cdot\dfrac{1}{f_1f_3}+5\dfrac{f_6}{f_2^5}\cdot f_1f_3 +2\notag\\
&\equiv \dfrac{f_6^2}{f_2^4}\left(\dfrac{f_8^2f_{12}^5}{f_2^2f_4f_6^4f_{24}^2}+q\dfrac{f_4^5f_{24}^2}{f_2^4f_6^2f_8^2f_{12}}\right)\notag\\
&\quad+5\dfrac{f_6}{f_2^5}\left(\dfrac{f_2f_8^2f_{12}^4}{f_4^2f_6f_{24}^2}-q\dfrac{f_4^4f_6f_{24}^2}{f_2f_8^2f_{12}^2}\right)+2,
\end{align}
from which we extract
\begin{align*}
&2\sum_{n=0}^{\infty}\textup{PD}_\textup{t}(6n+3)q^n\\
&\equiv \dfrac{f_2^5f_{12}^2}{f_1^8f_4^2f_{6}}+3\dfrac{f_2^4f_3^2f_{12}^2}{f_1^6f_4^2f_{6}^2}\\
&\equiv \dfrac{f_2f_{12}^2}{f_4^2f_{6}}+3\dfrac{f_{12}^2}{f_4^2f_6^2}\cdot f_1^2f_3^2\\
&\equiv \dfrac{f_2f_{12}^2}{f_4^2f_{6}}+3\dfrac{f_{12}^2}{f_4^2f_6^2}\cdot \left(\dfrac{f_2f_8^2f_{12}^4}{f_4^2f_6f_{24}^2}-q\dfrac{f_4^4f_6f_{24}^2}{f_2f_8^2f_{12}^2}\right)^2\\
&\equiv \dfrac{f_2f_{12}^2}{f_4^2f_{6}}+3\dfrac{f_{12}^2}{f_4^2f_6^2}\cdot \left(\dfrac{f_2^2f_8^4f_{12}^8}{f_4^4f_6^2f_{24}^4}-2qf_4^2f_{12}^2+q^2\dfrac{f_4^8f_6^2f_{24}^4}{f_2^2f_8^4f_{12}^4}\right)~(\textup{mod}~8).
\end{align*}
Extracting the terms involving $q^{2n+1}$ from both sides, and then dividing by 2, we have
\begin{align*}
\sum_{n=0}^{\infty}\textup{PD}_\textup{t}(12n+9)q^n&\equiv \dfrac{f_6^4}{f_3^2}~(\textup{mod}~4),
\end{align*}
from which we extract
\begin{align*}
\textup{PD}_\textup{t}(36n+21)&\equiv 0~(\textup{mod}~4)\\\intertext{and}
\textup{PD}_\textup{t}(36n+22)&\equiv 0~(\textup{mod}~4),
\end{align*}
which are \eqref{36n21} and \eqref{36n33}, respectively.

Now, from \eqref{3n-mod8}, we extract
\begin{align}\label{pdt-6n}2\sum_{n=0}^{\infty}\textup{PD}_\textup{t}(6n)q^n&\equiv \dfrac{f_4^2f_{6}^5}{f_1^6f_2f_3^2f_{12}^2}+5\dfrac{f_4^2f_{6}^4}{f_1^4f_2^2f_{12}^2}+2\notag\\
&\equiv \dfrac{f_4^2f_6^5}{f_2^5f_{12}^2}\cdot\dfrac{f_1^2}{f_3^2}-\dfrac{f_4^2f_6^4}{f_2^2f_{12}^2}\cdot\dfrac{1}{f_1^4}+2\notag\\
&\equiv \dfrac{f_4^2f_6^5}{f_2^5f_{12}^2}\left(\dfrac{f_2f_4^2f_{12}^4}{f_6^5f_8f_{24}}-2q\dfrac{f_2^2f_8f_{12}f_{24}}{f_4f_6^4}\right)\notag\\
&\quad-\dfrac{f_4^2f_6^4}{f_2^2f_{12}^2}\left(\dfrac{f_4^{14}}{f_2^{14}f_8^4}+4q\dfrac{f_4^2f_8^4}{f_2^{10}}\right)+2~(\textup{mod}~8),
\end{align}
which yields
\begin{align}
\label{12n}2\sum_{n=0}^{\infty}\textup{PD}_\textup{t}(12n)q^n&\equiv \dfrac{f_2^4f_6^2}{f_4f_{12}}\cdot\dfrac{1}{f_1^4}-f_6^2\cdot \dfrac{1}{f_3^4}+2\notag\\
&\equiv \dfrac{f_2^4f_6^2}{f_4f_{12}}\left(\dfrac{f_4^{14}}{f_2^{14}f_8^4}+4q\dfrac{f_4^2f_8^4}{f_2^{10}}\right)\notag\\
&\quad-f_6^2\left(\dfrac{f_{12}^{14}}{f_6^{14}f_{24}^4}+4q^3\dfrac{f_{12}^2f_{24}^4}{f_6^{10}}\right)+2~(\textup{mod}~8),
\end{align}
from which we extract
\begin{align}
\label{c11}2\sum_{n=0}^{\infty}\textup{PD}_\textup{t}(24n)q^n&\equiv \dfrac{f_2}{f_6}\cdot\dfrac{f_3^2}{f_1^2}-f_6^2\cdot\dfrac{1}{f_3^4}+2\notag\\
&\equiv \dfrac{f_2}{f_6}\left(\dfrac{f_4^4f_6f_{12}^2}{f_2^5f_8f_{24}}+2q\dfrac{f_4f_6^2f_8f_{24}}{f_2^4f_{12}}\right)\notag\\
&\quad-f_6^2\left(\dfrac{f_{12}^{14}}{f_6^{14}f_{24}^4}+4q^3\dfrac{f_{12}^2f_{24}^4}{f_6^{10}}\right)+2~(\textup{mod}~8).
\end{align}
We extract
\begin{align}
\label{48n}2\sum_{n=0}^{\infty}\textup{PD}_\textup{t}(48n)q^n&\equiv \dfrac{f_2^4f_6^2}{f_4f_{12}}\cdot\dfrac{1}{f_1^4}-\dfrac{f_6^2}{f_3^4}+2~(\textup{mod}~8).
\end{align}
From \eqref{12n} and \eqref{48n}, we arrive at
\begin{align*}
\textup{PD}_\textup{t}(12n)\equiv \textup{PD}_\textup{t}(48n)~(\textup{mod}~4),
\end{align*}
which, by iteration, gives \eqref{4k.12n}.

We also extract from \eqref{12n}
\begin{align}
\label{c12}\sum_{n=0}^{\infty}\textup{PD}_\textup{t}(24n+12)q^n&\equiv 2f_4^3-2qf_{12}^3\notag\\
&\equiv2f_{12}a(q^{12})-6q^4f_{36}^3-2qf_{12}^3~(\textup{mod}~4),
\end{align}
from which we extract
\begin{align*}
\sum_{n=0}^{\infty}\textup{PD}_\textup{t}(24(3n+1)+12)q^n&\equiv 2f_{4}^3 -2qf_{12}^3~(\textup{mod}~4).
\end{align*}
From the above two, we have
\begin{align*}
\textup{PD}_\textup{t}(24(3n+1)+12)=\textup{PD}_\textup{t}(3(24n+12))\equiv \textup{PD}_\textup{t}(24n+12)~(\textup{mod}~4).\end{align*}
Thus, for any nonnegative integer $\ell$,
\begin{align*}\textup{PD}_\textup{t}(3^\ell(24n+12))&\equiv \textup{PD}_\textup{t}(24n+12) ~(\textup{mod}~4).\end{align*} Combining the above with \eqref{4k.12n}, we readily arrive at \eqref{24-l-12}.

Next, from \eqref{c11} we also have
\begin{align*}
2\sum_{n=0}^{\infty}\textup{PD}_\textup{t}(24n)q^n&\equiv \dfrac{f_3^2}{f_6}\cdot\dfrac{f_2}{f_1^2}-\dfrac{f_6^2}{f_3^4}+2\notag\\
&\equiv \dfrac{f_3^2}{f_6}\left(\dfrac{f_6^4f_9^6}{f_3^8f_{18}^3}+2q\dfrac{f_6^3f_9^3}{f_3^7}+4q^2\dfrac{f_6^2f_{18}^3}{f_3^6}\right)
-\dfrac{f_6^2}{f_3^4}+2~(\textup{mod}~8),
\end{align*}
from which we extract
\begin{align*}
\sum_{n=0}^{\infty}\textup{PD}_\textup{t}(72n+48)q^n&\equiv 2\dfrac{f_2f_6^3}{f_1^4}\\
&\equiv 2\dfrac{f_6^3}{f_2}~(\textup{mod}~4),
\end{align*}
from which we further extract
\begin{align*}
\textup{PD}_\textup{t}(144n+120)&\equiv 0~(\textup{mod}~4),
\end{align*}
which is \eqref{144-120}.

From \eqref{c12}, we extract
\begin{align}
\label{c13}\sum_{n=0}^{\infty}\textup{PD}_\textup{t}(48n+12)q^n&\equiv 2f_2^3~(\textup{mod}~4)\intertext{and}
\label{c14}\sum_{n=0}^{\infty}\textup{PD}_\textup{t}(48n+36)q^n&\equiv 2f_6^3~(\textup{mod}~4),
\end{align}
which readily implies
\begin{align*}
\textup{PD}_\textup{t}(96n+60)&\equiv 0~(\textup{mod}~4)\\\intertext{and}
\textup{PD}_\textup{t}(96n+84)&\equiv 0~(\textup{mod}~4),
\end{align*}
which are \eqref{96-60} and \eqref{96-84}, respectively. Furthermore, equating the coefficients of $q^{3n+1}$ and $q^{3n+2}$ from both sides of \eqref{c14}, we arrive at
\begin{align*}
\textup{PD}_\textup{t}(144n+84)&\equiv 0~(\textup{mod}~4)\\\intertext{and}
\textup{PD}_\textup{t}(144n+132)&\equiv 0~(\textup{mod}~4),
\end{align*}
which are \eqref{144-84} and  \eqref{144-132}, respectively.

From \eqref{c13} and \eqref{c14}, we also have
\begin{align*}
\sum_{n=0}^{\infty}\textup{PD}_\textup{t}(96n+12)q^n\equiv \sum_{n=0}^{\infty}\textup{PD}_\textup{t}(288n+36)q^n\equiv 2f_1^3\equiv 2\left(f_3a(q^3)-3qf_9^3\right)~(\textup{mod}~4),
\end{align*}
from which we extract
\begin{align*}
\textup{PD}_\textup{t}(288n+204)\equiv \textup{PD}_\textup{t}(3(288n+204))\equiv 0~(\textup{mod}~4),
\end{align*}
which, by iteration, yields \eqref{288-204}.

From \eqref{c11}, we extract
\begin{align*}
\sum_{n=0}^{\infty}\textup{PD}_\textup{t}(48n+24)q^n&\equiv \dfrac{f_2f_3f_4f_{12}}{f_1^3f_6}-2q\dfrac{f_6^2f_{12}^4}{f_3^8}\notag\\
&\equiv \dfrac{f_2f_4f_{12}}{f_6}\cdot\dfrac{f_3}{f_1^3}-2qf_{12}^3\notag\\
&\equiv \dfrac{f_2f_4f_{12}}{f_6}\left(\dfrac{f_4^6f_6^3}{f_2^9f_{12}^2}+3q\dfrac{f_4^2f_6f_{12}^2}{f_2^7}\right)-2qf_{12}^3~(\textup{mod}~4),
\end{align*}
from which we further extract
\begin{align}
\label{96-24}\sum_{n=0}^{\infty}\textup{PD}_\textup{t}(96n+24)q^n&\equiv \dfrac{f_2^7f_3^2}{f_1^8f_6}\notag\\
&\equiv \dfrac{f_3^2}{f_6}\cdot f_2^3\notag\\
&\equiv \dfrac{f_3^2}{f_6} \left(f_6a(q^6)-3q^2f_{18}^3\right) ~(\textup{mod}~4)\intertext{and}
\label{96-72}\sum_{n=0}^{\infty}\textup{PD}_\textup{t}(96n+72)q^n&\equiv 3\dfrac{f_2^3f_6^3}{f_1^6}-2f_6^3\notag\\
&\equiv 2f_6^3+3f_2f_6^3\cdot\dfrac{1}{f_1^2}\notag\\
&\equiv 2f_6^3+3f_2f_6^3\left(\dfrac{f_8^5}{f_2^5f_{16}^2}+2q\dfrac{f_4^2f_{16}^2}{f_2^5f_8}\right)~(\textup{mod}~4).
\end{align}
From \eqref{96-24} we extract
\begin{align*}
\sum_{n=0}^{\infty}\textup{PD}_\textup{t}(288n+216)q^n&\equiv f_6^3\cdot\dfrac{f_1^2}{f_2}\\
&\equiv f_6^3\left(\dfrac{f_9^2}{f_{18}}-2q\dfrac{f_3f_{18}^2}{f_6f_9}\right)~(\textup{mod}~4),
\end{align*}
from which we extract
\begin{align*}
\textup{PD}_\textup{t}(864n+792)&\equiv 0~(\textup{mod}~4),
\end{align*}
which is \eqref{864-792}. We further extract
\begin{align*}
\sum_{n=0}^{\infty}\textup{PD}_\textup{t}(864n+216)q^n&\equiv \dfrac{f_3^2}{f_{6}}\cdot f_2^3\\
 &\equiv \dfrac{f_3^2}{f_{6}}\left(f_6a(q^6)-3q^2f_{18}^3\right)~(\textup{mod}~4),
\end{align*}
from which we have
\begin{align*}
\textup{PD}_\textup{t}(2592n+1080)&\equiv 0~(\textup{mod}~4),
\end{align*}which is \eqref{2592-1080}.

From \eqref{96-72} we extract
\begin{align*}
&\sum_{n=0}^{\infty}\textup{PD}_\textup{t}(192n+72)q^n\\
&\equiv 2f_3^3+3 \dfrac{f_3^3f_4^5}{f_1^4f_8^2}\\
&\equiv 2f_3^3+3f_3^3\cdot \dfrac{f_4}{f_2^2}\\
&\equiv 2f_3^3+3f_3^3\left(\dfrac{f_{12}^4f_{18}^6}{f_6^8f_{36}^3}+2q^2\dfrac{f_{12}^3f_{18}^3}{f_6^7}+4q^4\dfrac{f_{12}^2f_{36}^3}{f_6^6}\right) ~(\textup{mod}~4),
\end{align*}
from which we extract
\begin{align*}
\sum_{n=0}^{\infty}\textup{PD}_\textup{t}(576n+72)q^n&\equiv 2f_1^3+3\dfrac{f_1^3f_{4}^4f_{6}^6}{f_2^8f_{12}^3}\\
&\equiv \left(2+3\dfrac{f_6^2}{f_{12}}\right)\cdot f_1^3\\
&\equiv\left(2+3\dfrac{f_6^2}{f_{12}}\right)\left(f_3a(q^3)-3qf_9^3\right)~(\textup{mod}~4),
\end{align*}
from which we further extract
\begin{align*}
\textup{PD}_\textup{t}(1728n+1224)&\equiv 0~(\textup{mod}~4),
\end{align*}which is \eqref{1728-1224}.

From \eqref{pdt-6n}, we extract
\begin{align}
\label{12n-6}\sum_{n=0}^{\infty}\textup{PD}_\textup{t}(12n+6)q^n&\equiv -\dfrac{f_2f_3f_4f_{12}}{f_1^3f_6}-2\dfrac{f_2^4f_3^4f_4^4}{f_1^{12}f_6^2}\notag\\
&\equiv 2f_4^3+3\dfrac{f_2f_4f_{12}}{f_6}\cdot\dfrac{f_3}{f_1^3}\notag\\
&\equiv 2f_4^3+3\dfrac{f_2f_4f_{12}}{f_6}\left(\dfrac{f_4^6f_6^3}{f_2^9f_{12}^2}+3q\dfrac{f_4^2f_6f_{12}^2}{f_2^7}\right)\notag\\
&\equiv \left(2+3\dfrac{f_6^2}{f_{12}}\right)\cdot f_4^3+qf_{12}^3\cdot\dfrac{f_2^2}{f_4}\\
&\equiv \left(2+3\dfrac{f_6^2}{f_{12}}\right)\left(f_{12}a(q^{12})-3q^4f_{36}^3\right)\notag\\
&\quad+qf_{12}^3\left(\dfrac{f_{18}^2}{f_{36}}-2q^2\dfrac{f_6f_{36}^2}{f_{12}f_{18}}\right)
~(\textup{mod}~4),\notag
\end{align}
from which we extract
\begin{align*}
\textup{PD}_\textup{t}(36n+30)&\equiv 0
~(\textup{mod}~4),
\end{align*}
which is \eqref{36-30}, and
\begin{align*}
\sum_{n=0}^{\infty}\textup{PD}_\textup{t}(36n+18)q^n&\equiv 2qf_{12}^3+3qf_{12}^3\cdot\dfrac{f_2^2}{f_4}+\dfrac{f_6^2}{f_{12}}\cdot f_4^3\\
&\equiv 2qf_{12}^3+3qf_{12}^3\left(\dfrac{f_{18}^2}{f_{36}}-2q^2\dfrac{f_6f_{36}^2}{f_{12}f_{18}}\right)\\
&\quad+\dfrac{f_6^2}{f_{12}}\left(f_{12}a(q^{12})
-3q^4f_{36}^3\right)~(\textup{mod}~4).
\end{align*}
From the above we extract
\begin{align*}
\textup{PD}_\textup{t}(108n+90)&\equiv 0
~(\textup{mod}~4),
\end{align*}
which is \eqref{108-90}, and
\begin{align}
\label{108n-54}\sum_{n=0}^{\infty}\textup{PD}_\textup{t}(108n+54)q^n&\equiv \left(2+3\dfrac{f_6^2}{f_{12}}\right)\cdot f_4^3+qf_{12}^3\cdot\dfrac{f_2^2}{f_4}~(\textup{mod}~4).
\end{align}
From \eqref{12n-6} and \eqref{108n-54}, we have
\begin{equation*}
\textup{PD}_\textup{t}(9(12n+6))\equiv \textup{PD}_\textup{t}(12n+6)~(\textup{mod}~4),
\end{equation*}
which, upon iteration, yields \eqref{12-6}. This completes the proof.

\section*{Acknowledgment} The authors would like to thank the referee for his/her helpful comments and suggestions which helped improving the presentation of the paper.

\end{document}